\newtheorem{theorem}[subsection]{Theorem}
\newtheorem{definition}[subsection]{Definition}
\newcommand{\comment}[1]{}
\newcommand{\bZ}{\mathbb{Z}} 
\newcommand{\classGroups}{\mathcal{G}}
\newcommand{\alphabet}{\mathcal{A}}
\newcommand\Stn[1]{\mathcal{S}_{nb}(#1)}
\newcommand\Stnp[1]{\mathcal{S}_{nb}^\prime(#1)}
\newtheorem{cor}[subsection]{Corollary}
\newcommand{\rrr}{\mathbb R}
\newcommand{\z}{\mathbb Z}
\newcommand{\id}{\operatorname{id}} 
\newcommand\St[1]{\mathcal{S}(#1)}
\newcommand{\ddx}[1]{\mathcal D(M,#1)}
\newcommand{\com}{\left[ G\wr_n\bZ, G\wr_n\bZ \right]}
\newcommand{\ab}[1]{#1^{'}}
\newcommand{\qabc}[1]{#1^{''}}
\newcommand\wrm[1]{\mathop{\wr}\limits_{#1}}
\newcommand{\orb}[2]{\sharp Orb Int_{#1,#2}}
\newcommand{\Ex}[2]{Ext(\Gamma_{#1}, {#2})}
\newcommand{\In}[2]{Int(\Gamma_{#1},{#2})}
\title{Geometric interpretation of First Betti numbers of smooth functions orbits}
\author{Iryna Kuznietsova, Yuliia Soroka}
\address{Department of Algebra and Topology, Institute of Mathematics of NAS of Ukraine, Tereshchenkivska str. 3, Kyiv, 01024, Ukraine}
\curraddr{}
\email{kuznietsova@imath.kiev.ua, sorokayulya@imath.kiev.ua}
\subjclass[2000]{57S05, 57R45, 37C05}
\keywords{Wreath products, Betti numbers}
\begin{document}

\begin{abstract}
Let $M$ be a 2-disk or a cylinder, and $f$ be a smooth function on $M$ with constant values at $\partial M$, devoid of critical points in $\partial M$, and exhibiting a property wherein for every critical point $z$ of $f$ there is a local presentation of $f$ near $z$ that is a homogeneous polynomial without multiple factors. We consider $V$ to be either the boundary $\partial M$ (in the case of a 2-disk) or one of its boundary components (in the case of a cylinder) and $\mathcal{S}^{'}(f, V)$ to consist of diffeomorphisms preserving $f$, isotopic to the identity relative to $V$. We establish a correspondence: the first Betti number of the $f$-orbit is shown to be equal to the number of orbits resulting from the action of $\mathcal{S}^{'}(f,V)$ on the internal edges of the Kronrod-Reeb graph associated with $f$.

\end{abstract}

\maketitle
\section{Introduction}
   Consider a group $A$ and an integer $n \in \mathbb{Z}$. We define the semidirect product $A\wr_n\bZ$ of $A^n$ and $\bZ$ under the natural action of $\bZ$ on $A^n$ through cyclic shifts of coordinates. Formally, $A\wr_n\bZ = A^n\rtimes_\varphi \bZ$, where the homomorphism $\varphi\colon\mathbb{Z}\to \text{Aut}(A^n)$ is specified by $\varphi(k) (a_{0},\dots,a_{n-1})= (a_{k \operatorname{ mod }n},\dots, a_{n-1+k \operatorname{ mod } n})$ for all $k\in\mathbb{Z}$ and $(a_{0},\dots,a_{n-1})\in A^n$. This semidirect product $A\wr_n\bZ$ is called the {\it wreath product} of $A$ and $\bZ$.
	
   Note that the groups $A\wrm{1}\bZ$ and $A\times\bZ$ are isomorphic, as well as ${1}\wrm{n}\bZ$ and $\bZ$.	
\begin{definition}\label{def:classG}	
Let $\classGroups$ be a minimal class of groups satisfying the following conditions:
	
		\begin{enumerate}
			\item[\rm 1)] $ 1 \in \classGroups$;
			\item[\rm 2)]  if $A, B \in \classGroups$, then $A \times B \in \classGroups$;
			\item[\rm 3)] if $ A \in \classGroups$ and $n\geq 1$, then $A \wr_{n} \bZ\in \classGroups$.
		\end{enumerate}
	\end{definition}

    In other words, a group $G$ belongs to the class $\classGroups$ if and only if $G$ is derived from the trivial group through a finite sequence of operations involving $\times$ and $\wr_{n}\bZ$.
	It can be observed that any group $G\in\classGroups$ can be expressed as a word in the alphabet $\alphabet=\left\lbrace 1 , \bZ, \left( , \right) , \times, \wr_{2}, \wr_{3}, \wr_{4},\dots\right\rbrace $. This word is referred to as a {\it realization} of the group $G$ in the alphabet $\alphabet$. Notably, the realization of a group is not uniquely determined. 
		
	For instance, the same group can have multiple realizations, exemplified by the following instances:
	
	\begin{equation*} 
		\left(1\wrm{3}\bZ\right)\times\bZ=\bZ\times\left(1\wrm{3}\bZ\right)=\bZ\times\bZ=1\times \bZ\times\bZ.	
	\end{equation*}
	We have shown in \cite{MR4223545} that the number of symbols $\bZ$ in the realization of a group $G\in\classGroups$ in the alphabet $\alphabet$ is uniquely determined by $G$. 
	Namely, we proved the following result.	
		
	\begin{theorem}\label{ZC}\cite{MR4223545}
	Let $Z(G)$ and $[G,G]$ represent the center and the commutator subgroup of $G$, respectively. 
	Consider a group $G\in\classGroups$, an arbitrary realization $\omega$ of $G$ in the alphabet $\alphabet $, and $\beta_1 (\omega)$ be the number of symbols $\bZ$ in the realization $\omega$. 	
	 Then, the following isomorphisms hold:
	$$Z(G) \cong G/ [G,G]\cong \bZ^{\beta_1(\omega)}.$$
	
	In particular, the number $\beta_1(\omega)$ depends only on the group $G$.
	\end{theorem}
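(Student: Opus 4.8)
The plan is to argue by structural induction on the realization $\omega$ of $G$, following the recursive way in which $G$ is assembled from the atoms $1$ and $\bZ$ via the two operations $\times$ and $\wr_n\bZ$. I would prove the combined statement $Z(G)\cong G/[G,G]\cong \bZ^{\beta_1(\omega)}$ for the center and the abelianization simultaneously, since each constructor affects both invariants in the same way and by the same amount. The base cases are immediate: for $G=1$ both groups are trivial and $\beta_1(\omega)=0$, while for $G=\bZ$ both equal $\bZ$ and $\beta_1(\omega)=1$.

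The inductive step rests on two \emph{step lemmas}, one for each constructor. The direct-product step is routine: $Z(A\times B)=Z(A)\times Z(B)$ and $(A\times B)/[A\times B,A\times B]\cong \bigl(A/[A,A]\bigr)\times\bigl(B/[B,B]\bigr)$, and since the count of the symbol $\bZ$ is additive under $\times$, the exponents add correctly. The real content is the wreath step, where I must compute both $Z(A\wr_n\bZ)$ and $(A\wr_n\bZ)/[A\wr_n\bZ,A\wr_n\bZ]$ and show each is the corresponding invariant of $A$ enlarged by a single free factor $\bZ$, matching the extra symbol $\bZ$ contributed by $\wr_n\bZ$.

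Writing elements of $W=A\wr_n\bZ$ as pairs $(\mathbf a,k)$ with $\mathbf a=(a_0,\dots,a_{n-1})\in A^n$ and multiplication $(\mathbf a,k)(\mathbf b,l)=(\mathbf a\cdot\varphi(k)(\mathbf b),\,k+l)$, I would determine the center by imposing commutation with the two natural families of generators. Commuting with $(\mathbf 0,1)$ forces $\mathbf a$ to be fixed by the cyclic shift $\varphi(1)$, i.e.\ $\mathbf a=(a,\dots,a)$ is diagonal; commuting with all $(\mathbf b,0)$ then forces, when $A\neq 1$, the shift $\varphi(k)$ to be trivial, hence $k\in n\bZ$, together with $a\in Z(A)$. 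This yields $Z(W)=\{((a,\dots,a),k):a\in Z(A),\ k\in n\bZ\}\cong Z(A)\times\bZ$, raising the rank by exactly one. For the abelianization I would invoke the standard fact that for a semidirect product $N\rtimes Q$ one has $(N\rtimes Q)/[N\rtimes Q,N\rtimes Q]\cong (N^{ab})_Q\times Q^{ab}$, where $(N^{ab})_Q$ is the group of coinvariants of the $Q$-action on $N^{ab}$. Here $N^{ab}=\bigl(A/[A,A]\bigr)^n$ with $\bZ$ acting by the cyclic shift $\varphi(1)$, and the coinvariants of a cyclic shift collapse the $n$ coordinates onto one copy: the sum-of-coordinates map $\bigl(A/[A,A]\bigr)^n\to A/[A,A]$ is shift-invariant and its kernel is precisely the image of $1-\varphi(1)$ by a telescoping argument, so $(N^{ab})_Q\cong A/[A,A]$. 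Hence $(A\wr_n\bZ)/[A\wr_n\bZ,A\wr_n\bZ]\cong \bigl(A/[A,A]\bigr)\times\bZ$, again increasing the rank by one.

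Combining the two step lemmas with the inductive hypothesis gives $Z(G)\cong G/[G,G]\cong\bZ^{\beta_1(\omega)}$ for every realization $\omega$. The final assertion is then immediate: because $Z(G)$ is an isomorphism invariant of $G$ and is free abelian of rank $\beta_1(\omega)$, the number $\beta_1(\omega)=\operatorname{rank} Z(G)$ cannot depend on the chosen realization. I expect the main obstacle to be the center computation, specifically the case analysis showing that for $A\neq 1$ only the diagonal tuples and only the $\bZ$-components lying in $n\bZ$ survive; the abelianization is cleaner once the coinvariant formula is in place, the only delicate point there being the exactness of the telescoping identification.
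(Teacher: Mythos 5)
Your proof is correct, and its skeleton --- structural induction on the realization, with one step lemma for $\times$ and one for $\wr_n\bZ$, each raising the rank by the number of new $\bZ$ symbols --- is exactly the skeleton of the paper's proof. The genuine difference is in how the wreath-step abelianization is established. The paper does it in two explicit stages: first an elementary, hands-on computation showing that $[A\wr_n\bZ,\,A\wr_n\bZ]=\{(g_1,\dots,g_n,0)\;:\;\prod_i g_i\in[A,A]\}$ (via manipulations with conjugacy classes and cancellation of coordinates), and then the explicit epimorphism $\varphi(g_1,\dots,g_n,k)=\bigl((\prod_i g_i)[A,A],\,k\bigr)$ onto $A/[A,A]\times\bZ$, whose kernel is verified to be precisely that subgroup. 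You instead invoke the general coinvariants formula $(N\rtimes Q)^{ab}\cong (N^{ab})_Q\times Q^{ab}$ and compute the coinvariants of the cyclic shift by telescoping; this is the same sum-of-coordinates map in disguise, but packaged homologically. Your route is shorter and avoids having to identify the commutator subgroup at all, at the cost of quoting (or proving) the semidirect-product formula; the paper's route is more elementary and yields the explicit description of $[A\wr_n\bZ,A\wr_n\bZ]$ as a by-product of independent interest (it is stated there as a separate theorem). Your center computation (diagonal tuples with $k\in n\bZ$) is carried out directly, whereas the paper cites it as a known remark; the content is identical. One microscopic point: when $A=1$ your argument forcing $k\in n\bZ$ does not apply, but then $A\wr_n\bZ\cong\bZ$ and the conclusion $Z(A\wr_n\bZ)\cong Z(A)\times\bZ$ holds trivially, so nothing breaks.
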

	
Groups from the class $\classGroups$ emerge as fundamental groups of orbits of Morse functions on surfaces.

Let $M$ be a compact connected surface and $X$ be a closed subset of $M$. Let also $\ddx{X}$ denote the group of $C^\infty$-diffeomorphisms of $M$ fixed on $X$. There exists a natural right action of the group $\ddx{X}$ on the space of smooth functions $C^\infty(M,P)$ defined by the rule: $(f,h)\mapsto f \circ h$, where $h \in \ddx{X}$, $f \in C^\infty(M,P)$, and $P$ is a real line $\mathbb{R}$ or a circle $S^1$.

The {\it stabilizer} of the function $f$ with respect to the action 
 $$\mathcal{S}(f,X)=\{h\in \ddx{X}\ |\  f\circ \ h =f\}$$
consists of diffeomorphisms from $\ddx{X}$ that preserve $f$. The {\it orbit} of $f$ under this action is defined as $$\mathcal{O}(f,X)=\{f\circ  h\,\, |\,  h\in\ddx{X}\}$$
 representing the set of all possible compositions of $f$ with diffeomorphisms from $\ddx{X}$.
 For simplicity, when $X$ is the empty set, we use the notations $\mathcal{S}(f)$ and $\mathcal{O}(f)$.
 
 Furthermore, we define
 $\mathcal{S}^{'}(f, X) = \mathcal{S}(f,X) \cap \mathcal D_{\id}(M, X)$
 which denotes the subgroup of $\mathcal{S}(f)$ consisting of diffeomorphisms isotopic to the identity relative to $X$, even if such an isotopy does not necessarily preserve $f$.

 Endow the spaces $\ddx{X}$, $C^\infty (M,P)$ with Whitney $C^\infty$-topologies.  Let $\mathcal{O}_f(f,X)$ denote the path component of $f$ in $\mathcal{O}(f,X)$.
 
 	A map $f\in C^{\infty}(M,P)$ will be called {\it Morse} if all its critical points are non-degenerate. 
 	Denote by $\mathcal{M}(M, P)$ the space of all Morse maps $f\colon M\to P$. 
 	A Morse map $f$ is considered {\it generic} if it assigns distinct values to distinct critical points.
 	
 The homotopy types of stabilizers and orbits of Morse functions have been computed in a series of papers authored by Sergiy Maksymenko \cite{Maksymenko:AGAG:2006}, \cite{Maksymenko:UMZ:ENG:2012}, Bohdan Feshchenko and Sergiy Maksymenko \cite{Feshchenko:MFAT:2016},\cite{Feshchenko:Zb:2015}, and Elena Kudryavtseva \cite{Kudryavtseva:SpecMF:VMU:2012}, \cite{Kudryavtseva:MathNotes:2012}, \cite{Kudryavtseva:MatSb:ENG:2013}, \cite{Kudryavtseva:ENG:DAN2016}.
 In particular, E. Kudryavtseva demonstrated that when $M\neq S^2$, for each Morse function $f$, there exists a free action of some finite group $H$ on a $k$-torus $(S^1)^k$, such that $\mathcal{O}_f(f)$ is homotopy equivalent to a $k$-dimensional torus $(S^1)^k/H$. In fact, it was shown in \cite{Maksymenko:AGAG:2006} by S.~Maksymenko that if $f$ is generic, then $H$ is trivial, so $\pi_n\mathcal{O}_f(f)\simeq\pi_n((S^1)^k)$, and the general case of nontrivial $G$ was described in \cite{Kudryavtseva:SpecMF:VMU:2012}, \cite {Kudryavtseva:MathNotes:2012}, \cite{Kudryavtseva:MatSb:ENG:2013}, \cite {Kudryavtseva:ENG:DAN2016} by E. Kudryavtseva. Furthermore, precise algebraic structure of such groups for the case $M\neq S^2, T^2$ was described in \cite{Maksymenko:UMZ:ENG:2012}. 
\begin{definition}\label{clasF}
	Denote by  $\mathcal{F}(M,P)$ the space of smooth functions $f\in C^{\infty}(M,P)$ satisfying the following conditions:
	\begin{enumerate}
		\item
  	The function $f$ takes constant value at $\partial M$ and has no critical point in $\partial M$.
		\item
		For every critical point $z$ of $f$ there is a local presentation $f_z\colon \mathbb{R}^2 \to \mathbb{R}$ of $f$ near $z$ such that $f_z$ is a homogeneous polynomial  $\mathbb{R}^2 \to \mathbb{R}$ without multiple factors.
	\end{enumerate}
\end{definition}

	\begin{definition} Let $f \in \mathcal{F}(M, P)$. A compact submanifold $X \subset M$ will be called $f$-saturated if each of its connected components is either
		\begin{enumerate}
			\item a regular component of a level-set of $f$, or
			\item  a compact subsurface whose boundary consists of regular components of some level-sets of $f$.
		\end{enumerate}
		In particular, when all connected components of $X$ have dimension 2, then $X$ will be called an $f$-saturated subsurface.
		In this case the restriction $f|_X : X \rightarrow P$ satisfies conditions (1),(2) of Definition \ref{clasF}, that is $f|_X \in \mathcal{F}(X, P)$.
	\end{definition}

Notice that we have the following inclusions:
\[\mathcal{M}(M, P) \subset  \mathcal{F}(M, P ) \subset C^\infty (M, P ).\]

The~following theorem relating $\pi_0\mathcal{S}^{'}(f,\partial M)$ with the class $\classGroups$ is a direct consequence of results of \cite{Maksymenko:UMZ:ENG:2012}.
\begin{theorem} \cite{Maksymenko:UMZ:ENG:2012}\label{SI}
Let $M$ be a connected compact oriented surface except 2-sphere and  2-torus and let $f\in \mathcal{F}(M, P) $ be a Morse function. Then $\pi_0\mathcal{S}^{'}(f,\partial M)\in\classGroups$.
\qed
\end{theorem}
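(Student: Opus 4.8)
The plan is to reduce the statement to the explicit computation of $\pi_0\mathcal{S}^{'}(f,\partial M)$ carried out in \cite{Maksymenko:UMZ:ENG:2012}, and then to observe that that computation builds the group using exactly the two operations that generate $\classGroups$. Concretely, I would first recall that for $M\neq S^2,T^2$ and $f$ as in the hypothesis, Maksymenko describes $\pi_0\mathcal{S}^{'}(f,\partial M)$ recursively along the Kronrod--Reeb graph $\Gamma_f$ of $f$: starting from trivial groups, the group is assembled by a finite alternation of direct products and wreath-type products $(-)\wr_n\bZ$. Since $1\in\classGroups$ and, by Definition \ref{def:classG}, $\classGroups$ is closed under $\times$ and $\wr_n\bZ$, any group produced in this way is automatically a member of $\classGroups$, which is the entire content of the theorem.

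To set up the recursion I would cut $M$ along a suitable family of regular level components of $f$, decomposing $M$ into $f$-saturated subsurfaces indexed by the vertices of $\Gamma_f$, and then induct on the number of critical points (equivalently, on the complexity of $\Gamma_f$). The base case is a $2$-disk carrying a single nondegenerate extremum, whose component group of $f$-preserving boundary-fixing diffeomorphisms is trivial. In the inductive step, crossing one critical value attaches a piece to the already-built subsurface in one of two combinatorial patterns: either the new critical level merely unites previously independent subsurfaces, so that the associated groups multiply and one records a direct product $A\times B$; or the attachment arranges $n$ mutually isomorphic pieces into a cyclic configuration that can be rotated by an isotopy trivial relative to $\partial M$ yet nontrivial through $f$-preserving diffeomorphisms, in which case the contribution is a wreath product $A\wr_n\bZ$ (the degenerate instance $1\wr_n\bZ\cong\bZ$ accounting for the ``twist'' generators). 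Composing these steps expresses $\pi_0\mathcal{S}^{'}(f,\partial M)$ as a word in the alphabet $\alphabet$, hence as an element of $\classGroups$.

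The main obstacle, and the reason $S^2$ and $T^2$ must be excluded, is to certify that the recursion really bottoms out at trivial groups and that its only assembly operations are $\times$ and $\wr_n\bZ$. This relies on the precise local models guaranteed by the condition $f\in\mathcal{F}(M,P)$ to determine exactly which boundary-fixing isotopies are realizable through $f$-preserving diffeomorphisms, and it uses the boundary $\partial M$ as an anchor that prevents unwanted global symmetries and collapsing relations. For $S^2$ no such anchor exists, and for $T^2$ the extra global rotational symmetry makes the resulting group fall outside $\classGroups$; in every remaining case the faithfulness of the construction and the absence of additional relations are precisely what the analysis of \cite{Maksymenko:UMZ:ENG:2012} provides. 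The only remaining task is then the bookkeeping that matches Maksymenko's building blocks to the generating operations of $\classGroups$.
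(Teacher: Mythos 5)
Your proposal is correct and takes essentially the same route as the paper: the paper offers no independent argument for this statement, quoting it with \qed{} as a direct consequence of the structural results of \cite{Maksymenko:UMZ:ENG:2012}, which is precisely the reduction you make. Your additional sketch of the internals of that reference (recursion along $\Gamma_f$, direct products versus cyclic wreath-product configurations, and closure of $\classGroups$ under $\times$ and $\wr_n\bZ$) is consistent with how the cited computation proceeds and with the decomposition scheme the paper itself later uses in the proof of Theorem \ref{main}.
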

Iy was also shown in \cite{Maksymenko:AGAG:2006} that if $M$ is a connencted compact surface with negative Euler characteristics, then $\pi_0\mathcal{S}^{'}(f,X)\simeq\pi_1\mathcal{O}_f(f,X)$.
Notice that by Theorem \ref{ZC} for any $G\in\classGroups $ and its arbitrary presentation $\omega$ in the alphabet $\alphabet $ there is the number $\beta_1(\omega)$ depending only on the group $G$. So we will denote $\beta_1(\omega)$ by $\beta_1(G)$.
Moreover, we have the following Corollary from Theorem \ref{ZC}.
\begin{cor}\label{betti}\cite{MR4223545}
			Let $M$ be a connected compact oriented surface distinct from a 2-sphere and a 2-torus, and let $f \in \mathcal{F}(M,P)$. Also let $G=\pi_1\mathcal{O}(f)$, $\omega$ be any realization of $G$ in the alphabet $\alphabet_{\classGroups}$, and let $\beta_1 (\omega)$ be the number of symbols $\bZ$ in the realization $\omega$. Then the first integral homology group $H_1(\mathcal{O}_f(f), \bZ)$ of the orbit $\mathcal{O}_f(f)$ is a free abelian group of rank $\beta_1(\omega)$:
			$$H_1(\mathcal{O}_f(f), \bZ)\simeq\bZ^{\beta_1(\omega)}.$$
			In particular, $\beta_1(\omega)$ is the first Betti number of the orbit $\mathcal{O}_f(f)$.
	\end{cor}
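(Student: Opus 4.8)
The plan is to chain together the three results already stated in the excerpt. The final statement, Corollary \ref{betti}, asserts that for $M$ a connected compact oriented surface distinct from $S^2$ and $T^2$ and $f\in\mathcal{F}(M,P)$, the first homology $H_1(\mathcal{O}_f(f),\bZ)$ is free abelian of rank $\beta_1(\omega)$. The strategy is to reduce the homological claim to the already-known algebraic computation of $G/[G,G]$ provided by Theorem \ref{ZC}, passing through the homotopy-theoretic identifications recalled just before the corollary.

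First I would invoke the result of \cite{Maksymenko:AGAG:2006} quoted in the excerpt: since $M$ has negative Euler characteristic (which holds for every connected compact oriented surface other than $S^2$ and $T^2$), one has $\pi_0\mathcal{S}^{'}(f,X)\simeq\pi_1\mathcal{O}_f(f,X)$; taking $X=\varnothing$ gives $G=\pi_1\mathcal{O}_f(f)\cong\pi_0\mathcal{S}^{'}(f)$. Next, by Theorem \ref{SI} this group lies in the class $\classGroups$, so the notation $\beta_1(G)=\beta_1(\omega)$ is well defined by Theorem \ref{ZC}, independently of the chosen realization $\omega$. The second step is to relate $H_1$ of the orbit to the abelianization of its fundamental group: by the Hurewicz theorem the natural map $\pi_1\mathcal{O}_f(f)\to H_1(\mathcal{O}_f(f),\bZ)$ induces an isomorphism $H_1(\mathcal{O}_f(f),\bZ)\cong G/[G,G]$. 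Finally, Theorem \ref{ZC} identifies $G/[G,G]\cong\bZ^{\beta_1(\omega)}$, and composing the three isomorphisms yields $H_1(\mathcal{O}_f(f),\bZ)\cong\bZ^{\beta_1(\omega)}$, a free abelian group whose rank is by definition the first Betti number of $\mathcal{O}_f(f)$.

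The one genuine point requiring care is verifying the hypotheses that license each cited result rather than the homological algebra itself, which is formal. Theorem \ref{SI} is stated for Morse functions, whereas the corollary allows the broader class $f\in\mathcal{F}(M,P)$; so the main obstacle is to confirm that the membership $\pi_0\mathcal{S}^{'}(f)\in\classGroups$ and the homotopy equivalence $\mathcal{O}_f(f)\simeq(S^1)^k/H$ persist for the functions in $\mathcal{F}(M,P)$ that are not Morse. This is handled by the structural description of orbits for functions in $\mathcal{F}(M,P)$ (the homogeneous-polynomial condition in Definition \ref{clasF} guarantees that the local models at critical points behave, for the purposes of the stabilizer computation, like the non-degenerate case), so that the arguments of \cite{Maksymenko:UMZ:ENG:2012} and \cite{Maksymenko:AGAG:2006} apply verbatim. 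Once this extension is in place, no further computation is needed: the corollary is the concatenation $H_1(\mathcal{O}_f(f),\bZ)\cong G/[G,G]\cong\bZ^{\beta_1(\omega)}$, and the freeness and the rank statement are read off directly from Theorem \ref{ZC}.
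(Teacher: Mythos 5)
Your overall strategy (Hurewicz: $H_1(\mathcal{O}_f(f),\bZ)\cong G/[G,G]$, then Theorem \ref{ZC}: $G/[G,G]\cong\bZ^{\beta_1(\omega)}$) is indeed the core of the intended argument; note the paper does not reprove this corollary but imports it from \cite{MR4223545}, and those two steps are exactly what that proof rests on. The problem is the step where you identify $G=\pi_1\mathcal{O}_f(f)$ with $\pi_0\mathcal{S}'(f)$. Your justification is the claim that every connected compact oriented surface other than $S^2$ and $T^2$ has negative Euler characteristic. That is false: the 2-disk ($\chi=1$) and the cylinder ($\chi=0$) are connected, compact, oriented, and distinct from $S^2$ and $T^2$, and they are precisely the surfaces the present paper's main theorem is about, so they cannot be waved away. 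For these two surfaces the quoted result of \cite{Maksymenko:AGAG:2006} does not apply, and the conclusion genuinely fails: here $\mathcal{D}_{\id}(M)\simeq S^1$, and the fibration $\mathcal{S}'(f)\to\mathcal{D}_{\id}(M)\to\mathcal{O}_f(f)$ yields (when $\mathcal{S}_{\id}(f)$ is contractible) an extension
\begin{equation*}
1\to\bZ\to\pi_1\mathcal{O}_f(f)\to\pi_0\mathcal{S}'(f)\to 1,
\end{equation*}
so $\pi_1\mathcal{O}_f(f)\not\cong\pi_0\mathcal{S}'(f)$ in general. Concretely, for $M=D^2$ and $f$ with a single degenerate local extremum (case (2) of Step 1 in the paper's Theorem \ref{main}), one has $\pi_1\mathcal{O}_f(f)\cong\bZ$ while $\pi_0\mathcal{S}'(f)$ is a finite cyclic group; the latter is not even in $\classGroups$ and its abelianization is not free, so your chain would compute the wrong $H_1$. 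The correct identification for the disk and cylinder is $\pi_1\mathcal{O}_f(f)\cong\pi_0\mathcal{S}'(f,\partial M)\cong\pi_0\mathcal{S}'(f,V)$, obtained from the contractibility of $\mathcal{D}_{\id}(M,\partial M)$ together with results such as Theorem 5.5 of \cite{Maksymenko14} quoted in the paper, and it is this group (not $\pi_0\mathcal{S}'(f)$) that lies in $\classGroups$ and feeds into Theorem \ref{ZC}.

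Two smaller citation mismatches in the same step: Theorem \ref{SI} as stated concerns $\pi_0\mathcal{S}'(f,\partial M)$, not $\pi_0\mathcal{S}'(f)$ --- for surfaces with nonempty boundary these differ (boundary Dehn twists are killed when the isotopy need not fix $\partial M$) --- so even in the $\chi(M)<0$ case you are applying it to the wrong group; and the extension from Morse functions to all of $\mathcal{F}(M,P)$ should be attributed to the results of \cite{Maksymenko:UMZ:ENG:2012} (which are proved for $\mathcal{F}(M,P)$), rather than argued by the informal remark that degenerate points ``behave like the non-degenerate case.'' Once the identification of $G$ with the appropriate stabilizer group from $\classGroups$ is done correctly, uniformly in $M$, your Hurewicz-plus-Theorem-\ref{ZC} conclusion goes through exactly as you wrote it.
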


Our main result (Theorem \ref{main}) shows geometric interpretation of such numbers $\beta_1(G)$.
\comment{
\section{Quotient of group abelinization by group center}
\begin{theorem}
	Let $G \in \classGroups$.
	\begin{itemize}
		\item [1)] If $G= \bZ \wr_{n_1} \bZ \wr_{n_2} \wr ... \wr_{n_k}\bZ$, then \[\qabc{G} \cong \bZ_{n_1n_2...n_k} \times\bZ_{n_1n_2...n_k} \times \bZ_{n_2...n_k} \times ... \times \bZ_{n_k};\]
		\item [2)] If $G= A\times B$, where $A, B \in G$, then $\qabc{G}\cong \qabc{A} \times \qabc{B}$.
		\item[3)] If $G=(A_1\times A_2 \times ... \times A_k) \wr_{n} \bZ$, then 
		\[\qabc{G} \cong \frac{\qabc{\left( A_1 \wr_n \bZ\right)} \times .... \times\qabc{\left( A_k \wr_n \bZ\right)}}{\qabc{D}\left( A_1 \wr_n \bZ\right)\times \qabc{D}\left( A_{k-1} \wr_n \bZ\right)},
		\]
		where $A_i \in \classGroups, i= 1,2,..., k$.
	\end{itemize}
\end{theorem}
\begin{proof}
	\begin{itemize}
		\item[1)] Let us first consider the surjective homomorphism 
		$\eta: G \rightarrow Z^{k+1} $ defined in the proof of the Theorem 6.3 [] by the formula:
		 \begin{equation}\label{eta}
			\eta(g)=\left( \Sigma_{i_k=1}^{n_k}\Sigma_{i_{k-1}=1}^{n_{k-1}}...\Sigma_{i_1=1}^{n_1}a_{i_1,i_2,...,i_k}, \Sigma_{i_{k-1}=1}^{n_{k}}...\Sigma_{i_1=1}^{n_2}s_{i_1, i_2,...,i_{k-1}},...\Sigma_{i_{k-1}=1}^{n_{k}}s_{i_{k-1},i_k},s_k\right) 
		\end{equation}
		
		where $g=\left( \left\lbrace a_{i_k} \right\rbrace_{i_{k}=1}^{n_k},s_k\right)$ and $a_{i_k}\in \bZ \wr_{n_1} \bZ \wr_{n_2} \wr ... \wr_{n_{k-1}}\bZ, s_k \in \bZ$. It means that every $a_{i_k}= \left( \left\lbrace a_{i_{k-1},i_k } \right\rbrace_{i_{k-1}=1}^{n_{k-1}},s_{i_{k-1},i{k}}\right)$ and $a_{i_{k-1},i_k}\in \bZ \wr_{n_1} \bZ \wr_{n_2} \wr ... \wr_{n_{k-2}}\bZ, s_{i_{k-1},i{k}} \in \bZ$ and so on.
	\end{itemize}
Hence, $\ker \eta = [G,G]$ and elements of the group $G$ belongs to the same class $[g]=[\left(a,s_1,...,s_k \right) ] $ in $\ab{G}$ iff for coordinates of elements of $G$ we have \[\Sigma_{i_k=1}^{n_k}\Sigma_{i_{k-1}=1}^{n_{k-1}}...\Sigma_{i_1=1}^{n_1}a^{i_1,i_2,...,i_k}=a,\]
\[\Sigma_{i_{k-1}=1}^{n_{k}}...\Sigma_{i_1=1}^{n_2}s_{i_1, i_2,...,i_{k-1}}=s_1,...,  \Sigma_{i_{k-1}=1}^{n_{k}}s_{i_{k-1},i_k}=s_{k-1}.\]
 

\end{proof}
}
\section{Geometric interpretation of $\beta_1 (\omega)$}
Let $M$ be a smooth compact not necessarily connected surface, $P$ either the real line $\rrr$ or the circle $S^1$.
Consider the partition of $M$ into connected components of the level-sets of $f\in\mathcal{F}(M,P)$. The set of elements of this partition endowed with the quotient topology will be called {\it Kronrod-Reeb graph} of $f$ and denoted by $\Gamma_{f}$. 
Denote by $p$ the corresponding quotient map from $p\colon M\to\Gamma_f$.
Kronrod-Reeb graphs were independently introduced by G. M. Adelson-Velsky and A. S. Kronrod, and G. Reeb. Since each $f$ takes constant values on the connected components of $\partial M$ and has finite critical points its Kronrod Reeb graph really has the structure of a graph.

Since every $h\in\St{f}$ leaves each level-set of $f$ invariant every homeomorphism $h$ induces homeomorphism $\rho(h)\colon\Gamma_{f}\to\Gamma_{f}$.
So $\rho$ is a homomorphism ${\rho}\colon\St{f}\to Homeo(\Gamma_{f})$.

Let $X\subset M$ be compact surface, $W\subset \partial X$ be the union of some connected components of $\partial X$, and $f \in  \mathcal{F}(X, P)$.

\begin{definition}
	Vertex $v\in \Gamma_f$ will be called {\it external} with respect to $W$ if it is of degree 1 and corresponds either to non-degenerate critical point or to a connected components in $X\setminus W$. 
	
	Otherwise, it will be called internal. In other words, $v\in \Gamma_f$ will be called {\it internal} with respect to $W$ if it is either of degree greater than 1 or of degree 1, but corresponding to a degenerate critical point or to a connected component in $W$.

	Edge of $\Gamma_f$ will be called {\it external} with respect to $W$ if it is incidented to some external with respect to $W$ vertex of $\Gamma_f$. Otherwise, it will be called internal.


Denote by $E(\Gamma_{f|_X})$, $\Ex{f|_X}{W}$ and $\In{f|_X}{W}$ the sets of all edges of the graph $\Gamma_{f|_X}$, all external and all internal edges with respect to $X$ and $W$ correspondingly.

Denote by $\orb{X}{W}$ the number of orbits of the action of $ S^{'}(f|_X,W)$ on internal edges of $\Gamma_{f|_X}$.
\end{definition}

Consider the case when $M$ is a cylinder or a disk. According to Theorem 5.5 \cite{Maksymenko14}, inclusion:
\[S^{'}(f,\partial M) \subset S^{'}(f, V)\] 
is a homotopy equivalence, where $V=\partial M$ if $M=D^2$ or $V=S^1\times 0$ if $M$ is a cylinder. 



\begin{theorem} \label{main}
	Let $M$ be a disk $D^2$ or a cylinder $C=S^1\times [0,1]$ and let $f\in\mathcal{F}(M,P)$. Then $\orb{M}{V}=\beta_1(\pi_0 S^{'}(f,V))$.
	
\end{theorem}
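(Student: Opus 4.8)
The plan is to reduce the statement to a rank computation in group (co)homology and then feed in one geometric input about Dehn twists. First, since $M$ is a disk or a cylinder—hence a compact oriented surface that is neither $S^2$ nor $T^2$—Theorem \ref{SI} (in its extension to $f\in\mathcal{F}(M,P)$) together with the homotopy equivalence $S^{'}(f,\partial M)\subset S^{'}(f,V)$ of Theorem 5.5 of \cite{Maksymenko14} gives $G:=\pi_0 S^{'}(f,V)\in\classGroups$, so that $\beta_1(G)$ is defined. By Theorem \ref{ZC}, $Z(G)\cong G/[G,G]\cong\bZ^{\beta_1(G)}$; in particular $\beta_1(G)=\operatorname{rank}Z(G)=\operatorname{rank}\bigl(G/[G,G]\bigr)$. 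Thus it suffices to prove that this common rank equals the number $\orb{M}{V}$ of orbits of internal edges. The organizing tool is the homomorphism $\rho\colon G\to Homeo(\Gamma_f)$ induced by the action on the Kronrod--Reeb graph, which yields a short exact sequence
\[ 1\longrightarrow T\longrightarrow G\xrightarrow{\ \rho\ } G_\Gamma\longrightarrow 1, \]
where $T=\ker\rho$ and $G_\Gamma=\rho(G)$.

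The key geometric input, which I would extract from the structure theory of \cite{Maksymenko:UMZ:ENG:2012}, \cite{Maksymenko14}, is a description of $T$: it is free abelian, freely generated by the Dehn twists $\delta_e$ along one regular level circle in each \emph{internal} edge $e$, so that $\operatorname{rank}T$ equals the number of internal edges. The point is that a twist along a circle in an external edge can be unwound toward the adjacent external vertex—into the cap bounded by a nondegenerate extremum, or toward a free boundary component lying in $X\setminus W$—and is therefore trivial in $G$, exactly as the Dehn twist of a no--critical--point cylinder is trivial relative to one boundary component; by contrast, a twist in an internal edge is blocked on both sides (by a genuine branching, a degenerate critical point, or the fixed component $V\in W$) and survives, the twists along distinct internal edges being independent because their supporting circles are disjoint. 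Two further facts are immediate: $G_\Gamma$ is \emph{finite}, being a group of automorphisms of the finite graph $\Gamma_f$; and since every element of $G$ is represented by an orientation-preserving diffeomorphism (it is isotopic to the identity), conjugation carries $\delta_e$ to $\delta_{\rho(g)e}$ with no sign, so the conjugation action of $G$ on $T$ factors through the permutation action of $G_\Gamma$ on the internal edges. In particular $T$ is, as a $G_\Gamma$--module, the permutation module on the internal edges.

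With this in hand the rank computation is formal and splits into two inequalities. For the lower bound, for each orbit $O$ of internal edges the element $z_O=\sum_{e\in O}\delta_e\in T$ is $G_\Gamma$--invariant, hence (as $T$ is abelian and normal) central in $G$; the $z_O$ have pairwise disjoint supports in the free abelian group $T$, so they generate a copy of $\bZ^{\orb{M}{V}}$ inside $Z(G)$, whence $\operatorname{rank}Z(G)\ge\orb{M}{V}$. For the upper bound I would invoke the five-term exact sequence of the extension above,
\[ H_2(G_\Gamma)\longrightarrow T_{G_\Gamma}\longrightarrow G/[G,G]\longrightarrow G_\Gamma/[G_\Gamma,G_\Gamma]\longrightarrow 0, \]
in which the coinvariants of the permutation module satisfy $T_{G_\Gamma}\cong\bZ^{\orb{M}{V}}$ and $G_\Gamma/[G_\Gamma,G_\Gamma]$ is finite because $G_\Gamma$ is. Reading off ranks gives $\operatorname{rank}\bigl(G/[G,G]\bigr)\le\operatorname{rank}T_{G_\Gamma}+\operatorname{rank}\bigl(G_\Gamma/[G_\Gamma,G_\Gamma]\bigr)=\orb{M}{V}$. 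Combining the two bounds with Theorem \ref{ZC} yields $\beta_1(G)=\orb{M}{V}$, as required.

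I expect the genuine obstacle to be the second paragraph—pinning down $T$ as free abelian of rank equal to the number of internal edges and controlling the conjugation action—rather than the homological bookkeeping of the third. Here one must handle the degenerate critical points permitted by the class $\mathcal{F}$: these do not change the rank of $T$ (there is still a single independent twist per internal edge), but they enlarge the possible local symmetries and hence $G_\Gamma$, so care is needed to verify that such a vertex indeed blocks the unwinding of the adjacent twist and that the induced $G_\Gamma$--action on $T$ remains a sign-free permutation of the generators. A convenient way to make the whole argument rigorous, and to sidestep citing the exact form of $T$, is to run an induction on the number of internal edges, cutting $M$ along a regular level circle of an internal edge adjacent to an outermost internal vertex and matching the resulting wreath-product/direct-product decomposition of $G$ with the corresponding change in both $\beta_1$ and the orbit count; the example of a disk with two maxima and one saddle, where $G\cong 1\wr_{2}\bZ\cong\bZ$ and the single internal twist is the square of the branch-swapping generator, is the model case that such an induction must reproduce.
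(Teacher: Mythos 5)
Your outer reduction (paragraphs 1 and 3) is sound as conditional reasoning, and it is genuinely different from the paper's argument: the paper never looks at $Z(G)$ or a five-term sequence, but proves the equality by induction on the number of edges of $\Gamma_f$, peeling off the critical level $K$ nearest to $V$ and matching the direct-product/wreath-product decomposition of $\pi_0 S^{'}(f,V)$ (Theorem 5.8 of \cite{Maksymenko14}) against the change in the orbit count. However, your proof has a genuine gap exactly at the point you flag: the ``key geometric input'' of paragraph 2 is not proved, is not quotable from \cite{Maksymenko:UMZ:ENG:2012} or \cite{Maksymenko14} in the form you state, and as literally stated it is false. If $z$ is a degenerate local extremum, its local model (a definite homogeneous polynomial without multiple factors, necessarily of even degree, e.g.\ $f_z=(x^2+y^2)(x^2+2y^2)$) always admits a finite rotational symmetry group of some order $n\ge 2$ containing $-I$. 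On a disk neighbourhood one then has $\pi_0 S^{'}(f,\partial D^2)\cong\bZ$, but its generator is not the Dehn twist $\delta_e$ along the adjacent internal edge: it is a ``fractional twist'' $h$ (the symmetry rotation near $z$, damped to the identity near the boundary by a time function of the Hamiltonian flow of $f$), whose tangent map at $z$ is a nontrivial stabilizer-isotopy invariant and which satisfies $\delta_e=h^{\pm n}$. So the Dehn twists generate only a finite-index subgroup of $T=\ker\rho$, and $T$ is \emph{not} the permutation module on the internal edges with the $\delta_e$ as basis. This is precisely the phenomenon that forces the paper to work with $G(f)=S(f)/\Delta(f)$, whose definition remembers tangent maps at degenerate extrema.

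What survives of your paragraph 2 is the rank-level statement: $T$ is free abelian of rank equal to the number of internal edges, and $T\otimes\mathbb{Q}$ is the rational permutation module of the $G_\Gamma$-action on internal edges. With that weaker input your sandwich $\orb{M}{V}\le\operatorname{rank}Z(G)$ and $\operatorname{rank}\left(G/[G,G]\right)\le\orb{M}{V}$, combined with Theorem \ref{ZC}, does close the argument. But I see no way to prove this rank statement short of the same induction on the decomposition of $\overline{M\setminus R_K}$ that the paper carries out --- which you only sketch as a ``fallback'' in your final paragraph and do not execute. In particular, the wreath-product case (Case 2 of the paper, where $S^{'}(f,V)$ permutes the components of $\overline{M\setminus R_K}$ cyclically, and one must check that a whole orbit of disks contributes exactly the internal edges of a single representative plus one additional orbit for the $\wr_m\bZ$ factor) is where essentially all the work lies, both in the paper and in any completion of your approach; as submitted, your proof assumes its conclusion there rather than proving it.
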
	
	\begin{proof}

			We will prove the equality $\orb{M}{V}=\beta_1(\pi_0 S^{'}(f,V))$ by induction on the number of edges of ${\Gamma}_f$.
			  
				\textbf{Step 1.} If the graph $\Gamma_f$ has only one edge then $f$ has either:
						\begin{enumerate}
							\item one non-degenerate critical point if $M$ is a 2-disk or no critical points if it is a cylinder. Then edge is external, so $\orb {M}{V}=0$ and by Theorem 5.6 \cite{Maksymenko14} we get that $ \pi_0 S^{'}(f,V)$ is trivial, so $\beta_1(\pi_0 S^{'}(f,V))=0$,
							\item or degenerate critical point if $M$ is a 2-disk. Then the graph has only one internal edge, so $\orb{M}{V}=1$ and by Theorem 5.6 [Maksymenko,14] we get that $ \pi_0 S^{'}(f,V))=\bZ$, so $\beta_1(\pi_0 S^{'}(f,V))=1$. 				
						\end{enumerate}
					
					Hence, $\orb{M}{V}=\beta_1(\pi_0 S^{'}(f,V))$.

			\textbf{Step $n$.}   Suppose the graph $\Gamma_f$ has $n-1$ edges or less and $\orb{M}{V}=\beta_1(\pi_0 S^{'}(f,V))$. Let us check that for the graph $\Gamma_f$ with $n$ edges it is also true that $\orb{M}{V}=\beta_1(\pi_0 S^{'}(f,V))$.
			
		Let $K$ be the nearest to $V$ critical connected component of some level-set of $f$. Denote 
		
		$R_K$~--- $f$-saturated neighborhood of $K$;
		
		${\bf Z}$~--- the set of connected components of $\overline{M\setminus R_K}$;
		
		${\bf Z}^{fix}$~--- the set of invariant connected components of $\overline{M\setminus R_K}$ with respect to the action of $S^{'}(f,V)$;
		
		${\bf Z}^{reg}={\bf Z}\setminus{\bf Z}^{fix}$.
		
		According to Theorem 5.8 [Maksymenko 14] there are two possible cases. Consider the first case.
			 
		{\bf	Case 1.} Let ${\bf Z}^{reg}=\emptyset$, i.e ${\bf Z}={\bf Z}^{fix}=\{X_0, X_1,\dots, X_a\}$. We will enumerate $X_i$ so that $V\subset X_0$. 
		In particular, $X_0$ is always a cylinder.

			
			Let $\alpha\colon \Gamma_{f|_{\overline{M\setminus R_K}}} \to\Gamma_{f}$ be the inclusion and $\mu\colon E\left( \Gamma_{f|_{\overline{M\setminus R_K}}}\right) \to E(\Gamma_{f})$ be a natural bijection defined by the rule: for all $e \in E\left( \Gamma_{f|_{\overline{M\setminus R_K}}}\right), e^{' } \in E(\Gamma_{f})$ such that $\alpha(e) \subseteq  e^{'}$ we have $\mu (e)=e^{'}$.
			
			Thus, there are the following bijections 
			\begin{gather}
			 	\Ex{f|_{\overline{M\setminus R_K}}}{V \cup \partial R_K}\to  \Ex{f}{V},\\
			\In {f|_{\overline{M\setminus R_K}}}{V \cup \partial R_K}\to  \In{f}{V}. \label{orbIn}
			\end{gather}
			
			Denote by $\Stn{f, V}$ the intersection of $\mathcal{S}(f)$ with the group of diffeomorphisms of $M$ fixed on some neighborhood of $V$. 
			By Corollary 7.2  in \cite{Maksymenko14} the following inclusion is a homotopy equivalence: 
			\begin{equation*}
				\Stn{f, V}\subset \St{f, V}.
			\end{equation*}
			Moreover, in case 1 by Lemma 7.1 and Lemma 7.4 in \cite{Maksymenko14} the inclusion
			\begin{equation*}
				\Stnp{f, R_K\cup V}\subset \Stn{f, V}
			\end{equation*}
			is also a homotopy equivalence.
			
			Then by Theorem 5.8 in \cite{Maksymenko14}
			\begin{equation}\label{piproduct}
				\pi_0 S^{'}(f, V)\simeq\prod_{i=0}^a\pi_0 \mathcal{S}^{'}(f|_{X_i}, \partial X_i).			
			\end{equation}
			
			By Theorem 5.5 \cite{Maksymenko14} we have $\pi_0 \mathcal{S}^{'}(f|_{X_0}, \partial X_0)\simeq\z$, so 
			\begin{equation}\label{piproduct}
				\pi_0 S^{'}(f, V)\simeq\prod_{i=1}^a\pi_0 S^{'}(f|_{X_i}, \partial X_i)\times\z.			
			\end{equation}

			We have that isomorphism (\ref{piproduct}) induces epimorphisms $p_i\colon S^{'}(f, V)\to S^{'}(f|_{X_i},\partial X_i)$ for each $i=0,1,\dots, a$. 
			Denote by $G(f)=S(f)/\Delta (f)$, where $\Delta (f)$ is a subgroup of $S(f)$ preserving each element of $\Gamma_f$ and such that if $z$ is degenerate local extreme, tangent map of $f$ in $z$ is identity map. Then $p_i$ and $\rho$ induce homomorphisms 
			\begin{gather*}
				\hat {p}_i: G(f) \to G(f|_{X_i}),\\
				{\prod_{i=0}^{a} \hat{\rho}_i}: \prod_{i=1}^{a} S^{'}(f|_{X_i}, \partial X_i) \to  \prod_{i=1}^{a} G(f|_{X_i}).
			\end{gather*}
			
			Moreover, there is the following commutative diagram
			\[
			\xymatrix{
				S^{'}(f, V)  \ar[d]^{ pr} \ar[r]& S^{'}(f, R_K\cup V) \ar[d]^{pr}\ar[r]^{\prod_{i=0}^{a} p_i} &\prod_{i=0}^{a} S^{'}(f|_{X_i}, \partial X_i)  \ar[d]_{\prod_{i=0}^{a} \hat{pr}_i}\\
				\pi_0 S^{'}(f, V)  \ar[d]^{\widehat{\rho}} \ar[r]&\pi_0 S^{'}(f, R_K\cup V)  \ar[d]^{\widehat{\rho}}\ar[r]^{\prod_{i=0}^{a} p_i}&\prod_{i=0}^{a} \pi_0 S^{'}(f|_{X_i}, \partial X_i)  \ar[d]_{\prod_{i=0}^{a} \hat{\rho}_i}\\
				G (f) \ar[r]^\id&   G (f)\ar[r]^{\prod_{i=0}^{a} \widehat{p_i}}&\prod_{i=0}^{a} G(f|_{X_i}) .
			}
			\]
			
			 It follows from the diagram that for homeomorphism $h \in G(f)$ and edges $e_1, e_2 \in E(\Gamma_{f})$ the equality $h(e_1)=e_2$ holds  if and only if there exist $i$, where $i=1,\dots a$ and $g \in G(f|_{X_i})$ such that $\mu^{-1}(e_1), \mu^{-1}(e_2) \in E(\Gamma_{f|_{X_i}})$ and $g(\mu^{-1}(e_1))=\mu^{-1}(e_2)$.
				
So orbits of the action of $S^{'}(f, V)$ are in bijection with orbits  of actions of $S^{'}(f, \partial X_i)$. Using (\ref{orbIn})  we have
 $$\In{f|_{\overline{M\setminus R_K}}}{V \cup \partial R_K}= \sqcup_{i=0}^a \In{f|_{X_i}}{\partial X_i} $$ 
 
 and 
 \[\orb{M}{V}=\orb{\overline{M\setminus R_K}}{V \cup \partial R_K} =\sum_{i=0}^a\orb{X_i}{\partial X_i}=\sum_{i=1}^a\orb{X_i}{\partial X_i}+1. \]
Hence it is proven that 
\begin{equation}\label{orb1}
	\orb{M}{V}=\sum_{i=1}^a \orb{X_i}{\partial X_i}+1.
	\end{equation}

By induction assumption we get 
\begin{equation}\label{betaorb}
	\sum_{i=1}^a \beta(\pi_0 S^{'}(f|_{X_i}, \partial X_i))=\sum_{i=1}^a \orb{X_i}{\partial X_i}.
\end{equation}

Since all the groups $\pi_0 S^{'}(f, \partial M)$ and $\pi_0 S^{'}(f|_{X_i}, \partial X_i)$ in (\ref{piproduct}) belong to the class $\classGroups$ and using Theorem 1.8 [KuzSor] we obtain
\begin{equation}\label{bet1}
	\beta(\pi_0 S^{'}(f, V))=\sum_{i=1}^a \beta(\pi_0 S^{'}(f|_{X_i}, \partial X_i))+1.
\end{equation}

Indeed, it follows from (\ref{bet1}), (\ref{orb1}), and (\ref{betaorb}) that $\orb{M}{V}=\beta_1(\pi_0 S^{'}(f,\partial M))$ is true for all $f$ whose graph $\Gamma_f$ has $n$ edges.

The case 1 is proven.

{\bf Case 2.} Let the group $S{'}(f,V)$ non-effectively acts on components ${\bf Z} $.
Consider the subgroup $S^{'}({\bf Z}) =\{ h \in S^{'} (f,V) | h(Z)=Z \mbox{ for every } Z \in {\bf Z} \} $ which is the kernel of non-efficiency of action $S^{'} (f,V) $ on ${\bf Z}$. Then according to Theorem 5.8 [Maksymenko 14] the group $S^{'} (f,V)/S^{'}({\bf Z})\simeq \bZ_m$ acts effectively on ${\bf Z}$ and free on ${\bf Z}^{reg}$. If $m \geq 2$, then the action has precisely either one or two fixed elements. Thus, we have the two possible cases:  ${\bf Z}^{fix}=\{X_0\}$ or  ${\bf Z}^{fix}=\{X_0,X_1\}$. Let ${\bf Z}^{reg}=\{Y{'}_1,\dots, Y{'}_b\}$. Hence, $b$ divides $m$ and $c=b/m$ is the number of orbits of action $\bZ_m$. Since $\bZ_m$ freely acts on ${\bf Z}^{reg}$ we choose one element in each orbit and denote them $Y_1,\dots, Y_c$ being 2-disks.

a)  ${\bf Z}^{fix}=\{X_0\}$, where $X_0\supset V$ is a cylinder. 

Then
\begin{equation}
	\pi_0 S^{'}(f, V)\simeq\prod_{i=1}^c\pi_0 S^{'}(f|_{Y_i}, \partial Y_i) \wr_{m} \bZ.			
\end{equation}

b)  ${\bf Z}^{fix}=\{X_0, X_1\}$, where $X_0\supset V$ is a cylinder. 

Then
\begin{equation}
	\pi_0 S^{'}(f, V)\simeq\prod_{i=1}^c\pi_0 S^{'}(f|_{Y_i}, \partial Y_i) \wr_{m} \bZ \times \pi_0 S^{'}(f|_{X_1}, \partial X_1) .			
\end{equation}

All the groups $\pi_0 S^{'}(f, \partial M)$ and $\pi_0 S^{'}(f|_{Y_i}, \partial Y_i)$, $\pi_0 S^{'}(f|_{X_1}, \partial X_1)$ belong to the class $\classGroups$ and using Theorem 1.8 in \cite{MR4223545} we obtain

\[
	\beta(\pi_0 S^{'}(f, V))=\sum_{i=1}^c \beta(\pi_0 S^{'}(f|_{Y_i}, \partial Y_i))+1, \mbox { in case a)},
\] 

\[ 
 	\beta(\pi_0 S^{'}(f, V))=\sum_{i=1}^c \beta(\pi_0 S^{'}(f|_{Y_i}, \partial Y_i))+ 1+\beta(\pi_0 S^{'}(f|_{X_1}, \partial X_1)), \mbox { in case b)}.
\]
 
 	By induction assumption we get 
\[
 		\sum_{i=1}^c \beta(\pi_0 S^{'}(f|_{Y_i}, \partial Y_i))=\sum_{i=1}^c   \orb{Y_i}{\partial Y_i}, \quad  \beta(\pi_0 S^{'}(f|_{X_1}, \partial X_1))= \orb{X_1}{\partial X_1}.
\]
 	
 	It remains to show that
 	\begin{equation}\label{orb2}
 		\orb{M}{V}=\sum_{i=1}^c \orb{Y_i}{\partial Y_i}+1, \mbox { in case a)},
 	\end{equation}
 	\begin{equation}\label{orb3}
 		\orb{M}{{V}}=\sum_{i=1}^c \orb{Y_i}{\partial Y_i}+1 +\orb{X_1}{\partial X_1}, \mbox { in case b)},
 	\end{equation}

 		Notice that for any external vertex of $\Gamma_{f|_{\sqcup_{i=1}^cY_i \cup X_0}}$ under inclusion $\alpha\colon \Gamma_{f|_{\overline{M\setminus R_K}}} \to\Gamma_{f}$ remains external and vice versa. So every internal edge $e \in \Gamma_{\sqcup_{i=1}^cY_i \cup X_0}$  corresponds to some internal edge of  $\Gamma_{f}$.
 		Let $h \in S^{'}(f,\partial M) / S^{'}({\bf Z})$. For every $i \in  \left\lbrace 1, 2, ..., c\right\rbrace $ there is $k \in \left\lbrace 1, 2, ..., m\right\rbrace $ such that $h(Y_i)=Y_{i+ck}$ and if $e$ corresponds to $ Y_i$, then $h(e) \in orb \ e$. Thus there are no orbits of $S^{'}(f,\partial M) $ acting on $Y{'}_i, i=1,...,b$ other than orbits acting on $Y_i, i=1,...,c$. 
 
 	Then 
 	 	\begin{equation*}
  \orb{M}{V}=\orb{\sqcup_{i=1}^cY_i \cup X_0}{\sqcup_{i=1}^c\partial Y_i \cup \partial X_0}.
   	\end{equation*}
   
 And since $Y_1,\dots, Y_c$ are chosen as single elements from each orbit, so in case a)
   \begin{equation*}
   	\orb{\sqcup_{i=1}^cY_i \cup X_0}{\sqcup_{i=1}^c\partial Y_i \cup \partial X_0} =\sum_{i=1}^c \orb{Y_i}{\partial Y_i}+	\orb{X_0}{\partial X_0}=\sum_{i=1}^c \orb{Y_i}{\partial Y_i}+1.
   \end{equation*}
   
In the same way, we have in case b)   
    	
 	\begin{equation*}
 		\orb{M}{V}=\orb{\sqcup_{i=1}^cY_i \cup X_0\cup X_1}{\sqcup_{i=1}^c \partial Y_i \cup \partial X_0\cup X_1} =\sum_{i=1}^c \orb{Y_i}{\partial Y_i}+1 +\orb{X_1}{\partial X_1}.
 	\end{equation*}
  	Hence the equalities (\ref{orb2}, \ref{orb3}) are proven.
 	
  	The case 2 is proven.

	
\end{proof}

\comment{
\begin{theorem}\label{mth1}
Let $G\in\classGroups$, $\omega$ be any presentation of $G$ in the alphabet $\alphabet $, and $\beta_1 (\omega)$ be the number of symbols $\bZ$ in the presentation $\omega$. Then $Z(G)\simeq\bZ^{\beta_1(\omega)}$.
\end{theorem}
\begin{proof}
The proof follows from Remark \ref{centers} and the induction on $\beta_1 (\omega)$. It will be convenient to denote by $\widetilde{\omega}$ the group from the class $\classGroups$ determined by a word $\omega$. In particular, $\omega$ is a presentation of $\widetilde{\omega}$ in the alphabet $\alphabet $. 
Since $\widetilde{\omega}$ is a group isomorphic to $G$, we have
	\[ Z(G)=Z(\widetilde{\omega}).
	\]
Obviously, if $\beta_1(\omega)=1$, then $Z(G)\simeq \bZ$.
	Suppose we proved that $Z(G)\simeq\bZ^{\beta_1(\omega)}$ for all words with $\beta_1(\omega)\leq k$. Let us show this for $\beta_1(\omega)=k+1$. In this case the presentation $\omega$ can be written in two ways:
	
	\begin{enumerate}
		\item [1)] as a direct product $\omega_1\times\omega_2$ such that $\beta(\omega_1)+\beta_1(\omega_2)=k+1$, $\beta(\omega_1)\leq k$, $\beta(\omega_2)\leq k$;
		\item[2)] as a wreath product $\omega_1 \wr_{n} \bZ$, where $\beta_1(\omega_1)=k$.
	\end{enumerate}
	According to Remark \ref{centers} and the induction assumption we get for the first case 
	\[Z(\widetilde{\omega_1} \times \widetilde{\omega_2}) \cong Z(\widetilde{\omega_1}) \times Z(\widetilde{\omega_2})\simeq\bZ^{\beta_1(\omega_1)}\times\bZ^{\beta_1(\omega_2)}\simeq\bZ^{\beta_1(\omega_1)+\beta_1(\omega_2)}\simeq\bZ^{k+1},
		\] and for the second case
	\[
	Z(\widetilde{\omega})\simeq Z(\widetilde{\omega_1} \wr_{n} \bZ) \simeq Z(\widetilde{\omega_1}) \times n\bZ \simeq Z(\omega_1) \times \bZ\simeq\bZ^{\beta_1(\omega_1)}\times \bZ\simeq\bZ^{k}\times \bZ\simeq\bZ^{k+1}.
	\]
	
\end{proof}

\begin{theorem}\label{mth1}
Let $G\in\classGroups$, $\omega$ be any presentation of $G$ in the alphabet $\alphabet $, and $\beta_1 (\omega)$ be the number of symbols $\bZ$ in the presentation $\omega$. Then $Z(G)\simeq\bZ^{\beta_1(\omega)}$.
\end{theorem}

\section{Commutator subgroup}
\begin{theorem}\label{cs}
For any group $G$ the commutator subgroup of $G\mathop{\wr}\limits_n\bZ$ coincides with the following group 
$$\left[ G\mathop{\wr}\limits_n\bZ, G\mathop{\wr}\limits_n\bZ \right]=\left\{(g_1, g_2, \dots, g_n, 0) | \prod_{i=1}^n g_i\in [G,G]\right \}$$
\end{theorem}

\begin{proof}

Let us first show that every $g=(g_1, g_2, \dots, g_n, 0)$ such that $\prod_{i=1}^n g_i\in [G,G]$ is in $\com$. We will prove that the elements $h_1, h_2$ of the group $G\wr_n\bZ$, 
\[h_1=(g_1, g_2, \dots, g_n, k),\quad h_2=(g_1, g_2, \dots, g_{n-2},g_{n-1}g_n, e,k),\]
lie in the same conjugacy class, i.e. 
\begin{equation}\label{eq: the same conjugacy class}
h_2=h_1f, \mbox{ where } f\in\com.
\end{equation}

 Indeed, $f=(e,e,\dots,e,g_n,g_n^{-1},0)$ satisfies the equality (\ref{eq: the same conjugacy class}). It is easy to check that $f=[c,d]$, where $c=(e,e,\dots, e, g_n^{-1},1)$,  $d=(e,e,\dots, e, g_n,0)$, and hence $f\in \com$.

   Similarly, by induction, we can obtain that the elements 
   \[h_1=(g_1, g_2, \dots, g_n, k), \ h_3=\left( \prod_{i=1}^n g_i,e,e, \dots,e, k \right) \]
   lie in the same conjugacy class. 
   
   Notice that for elements $\alpha=(a,e,\dots,e,0)$, $\beta=(b,e,e,\dots,e,0)$ from $G\wr_n\bZ$ we have the equality 
   \[[\alpha,\beta]=([a,b], e,e,\dots,e,0).\]
 
   So, every $g=(g_1, g_2, \dots, g_n, 0)$ such that $\prod_{i=1}^n g_i\in [G,G]$ is in the same conjugacy class with the element
    \[\left( \prod_{i=1}^n g_i,e,e, \dots,e, 0)\right) \in\langle  ([a,b], e,e,\dots,e,0)\rangle=\com.\]

Let now $g=(g_1, g_2, \dots, g_n, k)\in\com$. Let also
$a=(a_1,a_2,\dots,a_n,l)$  and $b=(b_1,b_2,\dots, b_n,p)$ 
 be the elements in $ G\wr_n\bZ.$ By straightforward calculation we obtain
\begin{equation}\label{eq: com}
aba^{-1}b^{-1}=(a_{1-l}b_{1-l-p}a^{-1}_{1-l-p}b^{-1}_{1-p},\dots,a_{n-l}b_{n-l-p}a^{-1}_{n-l-p}b^{-1}_{n-p},0).
\end{equation}

 So, evidently, $k=0$. Obviously, in the notation (\ref{eq: com}) each $a_i$, $b_i$, $a^{-1}_i$, $b^{-1}_i$ enters once for each commutator $aba^{-1}b^{-1}$ and its inverse.
  Each $g\in\com$ is generated by commutators with such property, so the product of its first $n$ coordinates has a form 
  \[\prod_{i=1}^n g_i=c_1^{i_1}c_2^{i_2}\cdots c_r^{i_r},  i_r\in \{\pm 1\}, c_i\in G,\]
   where $c_i$ may not be different, but the sum of powers of same elements is always zero. 
   
   Since $c_ic_j=c_jc_i[c_i^{-1}, c_j^{-1}]$ by permutations we cancel out all $c_i$, so only commutators will remain. Thus, we get $\prod_{i=1}^{n}g_i\in [G,G]$.
 
\end{proof}

\comment{

\section{Commutator subgroup}
\begin{theorem}\label{cs}
For any group $G$ the commutator subgroup of $G\mathop{\wr}\limits_n\bZ$ coincides with the following group 
$$\left[ G\mathop{\wr}\limits_n\bZ, G\mathop{\wr}\limits_n\bZ \right]=\left\{(g_1, g_2, \dots, g_n, 0) | \prod_{i=1}^n g_i\in [G,G]\right \}$$
\end{theorem}

\begin{proof}

Let us first show that every $g=(g_1, g_2, \dots, g_n, 0)$ such that $\prod_{i=1}^n g_i\in [G,G]$ is in $\com$. We will show that the elements $h_1=(g_1, g_2, \dots, g_n, k)$, $h_2=(g_1, g_2, \dots, g_{n-2},g_{n-1}g_n, e,k)$ of $G\wr_n\bZ$ lie in the same conjugacy class, i.e. $h_2=h_1f$, where $f\in\com$.
 Indeed, $f=(e,e,\dots,e,g_n,g_n^{-1},0)$ satisfies the equality $h_2=h_1f$. It is easy to check that $f\in \com$, since $f=[c,d]$, where ${c=(e,e,\dots, e, g_n^{-1},1)}$, $d=(e,e,\dots, e, g_n,0)$.  Thus, by induction we obtain that the elements $h_1=(g_1, g_2, \dots, g_n, k)$ and $h_3=(\prod_{i=1}^n g_i,e,e, \dots,e, k)$ lie in the same conjugacy class. Notice that for elements $\alpha=(a,e,\dots,e,0)$, $\beta=(b,e,e,\dots,e,0)$ from $G\wr_n\bZ$ we have the equality $[\alpha,\beta]=([a,b], e,e,\dots,e,0)$.
   So, every $g=(g_1, g_2, \dots, g_n, 0)$ such that $\prod_{i=1}^n g_i\in [G,G]$ is in the same conjugacy class with the element $(\prod_{i=1}^n g_i,e,e, \dots,e, 0)\in< ([a,b], e,e,\dots,e,0)>=\com$.

Let now $g=(g_1, g_2, \dots, g_n, k)\in\com$. Let also $a=(a_1,a_2,\dots,a_n,l)$ and $b=(b_1,b_2,\dots, b_n,p)$ be the elements in $ G\wr_n\bZ.$ By straightforward calculation we obtain
\begin{equation} \label{comut}
aba^{-1}b^{-1}=(a_{1-l}b_{1-l-p}a^{-1}_{1-l-p}b^{-1}_{1-p},\dots,a_{n-l}b_{n-l-p}a^{-1}_{n-l-p}b^{-1}_{n-p},0).
\end{equation}

 So, evidently, $k=0$. Obviously, in the notation (\ref{comut}) each $a_i$, $b_i$, $a^{-1}_i$, $b^{-1}_i$ enters once for each commutator $aba^{-1}b^{-1}$ and its inverse.
  Each $g\in\com$ is generated by commutators with such property, so the product of its first $n$ coordinates has a form $\prod_{i=1}^n g_i=c_1^{i_1}c_2^{i_2}\cdots c_r^{i_r}$, $i_r\in \{\pm 1\} $, $c_i\in G$, where $c_i$ may not be different, but the sum of powers of same elements is always zero. Since $c_ic_j=c_jc_i[c_i^{-1}, c_j^{-1}]$ by permutations we cancel out all $c_i$, so only commutators will remain. Thus, we get $\prod_{i=1}^{n}g_i\in [G,G]$.
 
\end{proof}
}
\begin{theorem} \label{com}
For any group $G$ we have the following isomorphisms of quotient groups
$$G\mathop{\wr}_{n}\bZ \bigl/ \com   \cong G/[G,G] \times \bZ.$$
\end{theorem}
\begin{proof}
Let us construct a homomorphism $\varphi\colon G\wr_n\bZ\to G / [G,G] \times \bZ$ defined by 

$$\varphi(g_1, g_2, \dots, g_n, k)=\left(\left(\prod_{i=1}^n g_i \right )[G,G], k\right).$$

To check that $\varphi$ is a homomorphism we compute 
\begin{gather*}
\varphi(a_1, a_2, \dots, a_n, k)\varphi(b_1, b_2, \dots, b_n, p)=(a_1 a_2\cdots a_n[G,G], k)(b_1 b_2 \cdots b_n[G,G], p)=\\
=(a_1 a_2\cdots a_nb_1 b_2 \cdots b_n[G,G], kp),\\
\varphi((a_1, a_2, \dots, a_n, k)(b_1, b_2, \dots, b_n, p))=\varphi(a_{(1+p) \operatorname{ mod }n}b_1, a_{(2+p) \operatorname{ mod }n}b_2, \dots, a_{(n+p) \operatorname{ mod }n}b_n, kp)=\\
=(a_{(1+p) \operatorname{ mod }n}b_1 a_{(2+p) \operatorname{ mod }n}b_2 \cdots a_{(n+p) \operatorname{ mod }n}b_n[G,G], kp).\\
\end{gather*}

Since $G/[G,G]$ is abelian we get 
$$ (a_{(1+p) \operatorname{ mod }n}b_1 a_{(2+p) \operatorname{ mod }n}b_2 \cdots a_{(n+p) \operatorname{ mod }n}b_n[G,G], kp)=(a_1 a_2\cdots a_nb_1 b_2 \cdots b_n[G,G], kp),$$
hence $\varphi$ is a homomorphism.
 
It is onto since for each element $(h,n)\in G/ [G,G] \times \bZ$ there is $(h,e,e,\cdots,n)$ which satisfies 
$$\varphi(h,e,e,\dots,n)=(h,n).$$
 The kernel of $\varphi$ is then $\ker \varphi=\{(g_1, g_2, \dots, g_n, 0) | \prod_{i=1}^n g_i\in [G,G]\}$, which coincides with $\com$.  
\end{proof}

\begin{theorem}\label{mth2}
Let $G\in\classGroups$, $\omega$ be any presentation of $G$ in the alphabet $\alphabet $, and $\beta_1 (\omega)$ be the number of symbols $\bZ$ in the presentation $\omega$. Then $G/[G,G]\simeq\bZ^{\beta_1(\omega)}$.
\end{theorem}
\begin{proof}
The proof is similar to Theorem \ref{mth1}. One should only use Theorem \ref{com} and the fact that for any two groups $A$ and $B$ it holds
\begin{equation}\label{dir}
 A\times B/[A\times B, A\times B]\simeq A/[A, A]\times B/[B, B]
\end{equation}
instead of Remark \ref{centers}.
To prove (\ref{dir}) it is enough to check that the map $\varphi\colon A\times B\to A/[A, A]\times B/[B, B]$ defined by 
$$ (a,b)\mapsto(a[A,A], b[B,B])$$
is a surjective homomorphism with the kernel $\ker \varphi=[A\times B, A\times B]$. 

Since $\varphi$ is a product of surjective homomorphisms $\varphi_1$, $\varphi_2$ defined by  
\begin{gather*}
\varphi_1\colon A\times B\to A/[A,A],\qquad\varphi_1(a,b)=a[A,A], \\
\varphi_2\colon A\times B\to B/[B,B],\qquad \varphi_2(a,b)= b[B,B],
 \end{gather*}
it is a surjective homomorphism as well. 

Further, notice that $$\ker\varphi=\{(a,b)|a\in[A,A],b\in[B,B]\}.$$

Let us check that $\ker\varphi\subset[A\times B, A\times B].$
Indeed, let $(\prod[a_i, b_i], \prod [c_j,d_j])\in\ker\varphi$, where $a_i, b_i\in A$, $c_j, d_j\in B$, then 
\begin{gather*}
(\prod_i[a_i, b_i], \prod_j [c_j,d_j])=(\prod_i[a_i, b_i], e)(e, \prod_j [c_j,d_j])=\prod_i([a_i, b_i], e)\prod_j(e,  [c_j,d_j])=\\
=\prod_i[(a_i,e),(b_i,e)]\prod_j[(e,c_j), (e,d_j)]
\in[A\times B, A\times B].
\end{gather*}
Conversely, for any commutator $[(a,b),(c,d)]$ in $[A\times B, A\times B]$ we have 
$$[(a,b),(c,d)]=(a,b)(c,d)(a^{-1},b^{-1})(c^{-1},d^{-1})=([a,c],[b,d])\in\ker\varphi.$$
Theorem is proved.
\comment{
It can easily be proven in the same way as Theorem \ref{mth1}, using the induction on $\beta_1 (\omega)$.
Instead of Remark \ref{centers} we have now Theorem \ref{com} and the fact that 
$$ G_1\times G_2/[G_1\times G_2, G_1\times G_2]\simeq G_1/[G_1, G_1]\times G_2/[G_2, G_2].$$
}
\end{proof}

Now we can get the evident proof of our main result, Theorem\ref{ZC}. Let us recall it.\\

	{\bf Theorem.\ref{ZC}}
	{\it Let $G\in\classGroups $, $\omega$ be an arbitrary presentation of $G$ in the alphabet $\alphabet $, and $\beta_1 (\omega)$ be the number of symbols $\bZ$ in the presentation $\omega$. 	
	 Then there are the following isomorphisms:
	$$Z(G) \cong G/ [G,G]\cong \bZ^{\beta_1(\omega)}.$$}
\begin{proof} 
Under the same assumptions, we get in Theorem \ref{mth1} that $Z(G)\simeq\bZ^{\beta_1(\omega)}$, and in Theorem \ref{mth2} that $G/[G,G]\simeq\bZ^{\beta_1(\omega)}$. Thus, evidently,
	$$Z(G) \cong G/ [G,G]\cong \bZ^{\beta_1(\omega)}.$$
\end{proof}
}

\bibliographystyle{amsalpha} 
\bibliography{a1}

\end{document}